\newtheorem{theorem}{Theorem}[section]
\newtheorem{lemma}[theorem]{Lemma}
\newtheorem{corollary}[theorem]{Corollary}
\theoremstyle{definition}
\newtheorem{definition}[theorem]{Definition}
\newtheorem{example}[theorem]{Example}
\theoremstyle{remark}
\numberwithin{equation}{section}
\begin{document}
\setcounter{page}{1}

\title[quantum arithmetic of drinfeld modules]{Quantum arithmetic of Drinfeld modules}

\author[Nikolaev]
{Igor V. Nikolaev$^1$}

\address{$^{1}$ Department of Mathematics and Computer Science, St.~John's University, 8000 Utopia Parkway,  
New York,  NY 11439, United States.}
\email{\textcolor[rgb]{0.00,0.00,0.84}{igor.v.nikolaev@gmail.com}}

\dedicatory{All data are available as part of the manuscript}

\subjclass[2020]{Primary 11M55; Secondary 46L85.}

\keywords{Drinfeld modules, noncommutative tori.}


\begin{abstract}
We study the quantum invariants of projective varieties 
 over  the number fields. Namely, explicit formulas for a functor $\mathscr{Q}$ on such varieties 
are proved.   The case of abelian varieties with complex multiplication 
is treated in detail. 
\end{abstract}

\maketitle

\section{Introduction}
Quantum arithmetic deals with a functor $\mathscr{Q}$ on the projective varieties $V(k)$ over
a number field $k$; we refer the reader to  Section 2.3 or \cite[Theorem 1.3]{Nik1} for the details.  
Such a functor  takes  values in the triples $(\Lambda, [I], K)$
consisting of a real number field $K$, an ideal class $[I]$ and an order $\Lambda\subset K$, i.e. a  subring 
 of  the ring of integers  of  $K$  [Handelman 1981] \cite{Han1}.  
 The invariant $(\Lambda, [I], K)$ comes from the $K$-theory 
 of operator algebras  related to the 
 quantum mechanics  [Blackadar 1986] \cite{B}; hence the name.   
 The existence of functor   $\mathscr{Q}$ was  proved by contradiction \cite[Remark 1.4]{Nik1}.  
  Such a  proof entails  no  efficient formula for the number field $K$ in terms of the field of definition of variety $V(k)$,  
  except for the special case of complex multiplication \cite[Theorem 4.1]{Nik1}. 
The aim of our note is to fill the gap by establishing such a formula  (Theorem \ref{thm1.1}).
We shall use the following notation and facts. 

Let $\mathfrak{k}:=\mathbf{F}_{p^n}$ be a finite field and $\tau(x)=x^p$. Consider a ring $\mathfrak{k}\langle\tau\rangle$
of the non-commutative polynomials given by the commutation relation $\tau a=a^p\tau$ for all $a\in A$, where $A:=\mathfrak{k}[T]$
is the ring of polynomials in  variable $T$ over $\mathfrak{k}$.  
The Drinfeld module $Drin_A^r(\mathfrak{k})$ of rank $r\ge 1$ is a homomorphism 
\begin{equation}
\rho: A\buildrel r\over\longrightarrow \mathfrak{k}\langle\tau\rangle
\end{equation}
given by a polynomial $\rho_a=a+c_1\tau+\dots+c_r\tau^r$,
where $a\in A$ and $c_i\in \mathfrak{k}$ 
[Rosen 2002] \cite[Section 12]{R}. 
An isogeny between Drinfeld modules  $Drin_A^{r}(\mathfrak{k})$ and $\widetilde{Drin}_A^{r}(\mathfrak{k})$
is a surjective morphism 
$f: Drin_A^{r}(\mathfrak{k})\to \widetilde{Drin}_A^{r}(\mathfrak{k})$ with a finite kernel,  \textit{ibid}. 
Consider a torsion submodule $\Lambda_{\rho}[a]:=\{\lambda\in\overline{\mathfrak{k}} ~|~\rho_a(\lambda)=0\}$ of
the $A$-module $\overline{\mathfrak{k}}$. 
 Drinfeld modules $Drin_A^r(\mathfrak{k})$ and associated torsion submodules $\Lambda_{\rho}[a]$ 
 define generators of a non-abelian class field theory for the function fields. 
 Namely, for each non-zero $a\in A$ the function 
field $\mathfrak{k}(T)\left(\Lambda_{\rho}[a]\right)$  is a Galois extension of the field  $\mathfrak{k}(T)$
of rational functions in variable $T$ over $\mathfrak{k}$,
such that its  Galois group is isomorphic to a subgroup of the matrix group $GL_r\left(A/aA\right)$
[Rosen 2002] \cite[Proposition 12.5]{R}. 

On the other hand, the norm closure of a representation of the multiplicative semi-group of the ring 
$\mathfrak{k}\langle\tau\rangle$ [Li 2017] \cite{Li1} 
by  bounded linear operators on a Hilbert space gives rise to the noncommutative torus $\mathscr{A}_{RM}^{2r}$ having
real multiplication (RM)  \cite{Nik2}. 
The latter is   a $C^*$-algebra generated by the unitary operators $u_1,\dots, u_{2r}$ 
satisfying the commutation relations $\{u_ju_i=e^{2\pi i\theta_{ij}}u_iu_j ~|~1\le i,j\le 2r\}$,
where $\theta_{ij}$ are algebraic numbers and  $\Theta=(\theta_{ij})\in M_{2r}(\mathbf{R})$ is a 
skew-symmetric matrix  [Rieffel 1990] \cite{Rie1}.
The $K$-theory of the $C^*$-algebra  $\mathscr{A}_{RM}^{2r}$ is well known 
[Blackadar 1986] \cite[Chapter III]{B} and [Rieffel 1990] \cite[Section 3]{Rie1}.
Namely, the Grothendieck semi-group is given by the formula  $K_0^+(\mathscr{A}_{RM}^{2r})\cong \mathbf{Z}+\alpha_1\mathbf{Z}+\dots+
\alpha_{r}\mathbf{Z}\subset \mathbf{R}$, where $\alpha_i$ are algebraic integers of degree $2r$ over $\mathbf{Q}$
\cite[Section 2.2.2]{Nik2}. 
 The following is true  \cite[Theorem 3.3]{Nik2}:  (i)  there exists a functor $F: Drin_A^{r}(\mathfrak{k})\mapsto \mathscr{A}_{RM}^{2r}$
from the category of Drinfeld  modules $\mathfrak{D}$ to a category 
of the noncommutative tori $\mathfrak{A}$,   which maps any pair of isogenous  (isomorphic, resp.) 
modules  $Drin_A^{r}(\mathfrak{k}), ~\widetilde{Drin}_A^{r}(\mathfrak{k})\in \mathfrak{D}$
to a pair of the homomorphic (isomorphic, resp.)  tori  $\mathscr{A}_{RM}^{2r}, \widetilde{\mathscr{A}}_{RM}^{2r}
\in \mathfrak{A}$,  (ii) 
$F(\Lambda_{\rho}[a])=\{e^{2\pi i\alpha_i+\log\log\varepsilon} ~|~1\le i\le r\}$,
where $\mathscr{A}_{RM}^{2r}=F(Drin_A^r(\mathfrak{k}))$ and   $\log\varepsilon$ is a scaling factor 
and (iii) the number field $k=\mathbf{Q}(F(\Lambda_{\rho}[a]))$ is the extension of its subfield 
with the Galois group $G\subseteq GL_r\left(A/aA\right)$.

 An isogeny  $f:Drin_A^{r}(\mathfrak{k})\to\widetilde{Drin}_A^{r}(\mathfrak{k})$ defines a Grothendieck semi-group
 \linebreak
 $K_0^+(\widetilde{\mathscr{A}}_{RM}^{2r})=\{\mathbf{Z}+\frac{\alpha_1}{m_1}\mathbf{Z}+\dots+\frac{\alpha_{r}}{m_r}\mathbf{Z} ~|~m_i\in \mathbf{N}\}$
 (Lemma \ref{lm3.1} and Corollary \ref{cor3.2}) 
 and an extension of the number field $k\cong\mathbf{Q}(F(\Lambda_{\rho}[a]))$ (Lemma \ref{lm3.3}). 
 We use these facts to construct an (\'etale) branched covering $V(k)\to \mathbf{C}P^n$ [Namba 1985] \cite[Theorem 5]{Nam1}
 of the $n$-dimensional projective space $\mathbf{C}P^n$ (Corollary \ref{cor3.4}). 
 Denote by $\log k$ ($\arccos k$, resp.) a number field 
generated by $\alpha_i$, such that $k\cong \mathbf{Q}(e^{2\pi i\alpha_i+\log\log\varepsilon})$  
($k\cong \mathbf{Q}(\cos 2\pi\alpha_i \times\log\varepsilon)$, resp.)  
(Corollary \ref{cor2.2}). Our main result can be formulated as follows. 
\begin{theorem}\label{thm1.1}
\begin{equation}\label{eq1.1}
\mathscr{Q}(V(k))=
\begin{cases} (\Lambda, [I], ~\log k), & if ~k\subset\mathbf{C} - \mathbf{R},\cr
                (\Lambda, [I], ~\arccos k), & if ~k\subset\mathbf{R}.
\end{cases}               
\end{equation}
\end{theorem}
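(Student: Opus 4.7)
The strategy is to combine the functor $F:\mathfrak{D}\to\mathfrak{A}$ of \cite[Theorem 3.3]{Nik2} with the branched covering $V(k)\to \mathbf{C}P^n$ supplied by Corollary \ref{cor3.4} (via \cite[Theorem 5]{Nam1}) in order to read off all three coordinates of $\mathscr{Q}(V(k))$ from an associated noncommutative torus with real multiplication. Concretely, I would first use the covering to attach to $V(k)$ a Drinfeld module $Drin_A^r(\mathfrak{k})$ together with a torsion submodule $\Lambda_\rho[a]$ such that $k\cong \mathbf{Q}(F(\Lambda_\rho[a]))$. The existence of such a module is guaranteed by item (iii) of \cite[Theorem 3.3]{Nik2}, which matches the Galois data of the branched cover against the function-field Galois group $GL_r(A/aA)$ of \cite[Proposition 12.5]{R}.

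Applying $F$ to $Drin_A^r(\mathfrak{k})$ then produces a torus $\mathscr{A}_{RM}^{2r}$ whose Grothendieck semi-group is $\mathbf{Z}+\alpha_1\mathbf{Z}+\dots+\alpha_r\mathbf{Z}$. The first two coordinates of $\mathscr{Q}(V(k))$, namely the order $\Lambda$ and the ideal class $[I]$, are precisely the Handelman invariants \cite{Han1} attached to this pseudo-lattice. Any isogeny $f:Drin_A^r(\mathfrak{k})\to\widetilde{Drin}_A^r(\mathfrak{k})$ induced by a deck transformation of the covering refines the semi-group to $\mathbf{Z}+\frac{\alpha_1}{m_1}\mathbf{Z}+\dots+\frac{\alpha_r}{m_r}\mathbf{Z}$ by Lemma \ref{lm3.1} and Corollary \ref{cor3.2}, and since this refinement remains commensurable with the original lattice, the pair $(\Lambda,[I])$ is left invariant.

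The third coordinate is identified by invoking the explicit formula $F(\Lambda_\rho[a])=\{e^{2\pi i\alpha_i+\log\log\varepsilon}~|~1\le i\le r\}$ from item (ii) of \cite[Theorem 3.3]{Nik2} together with the definitions of $\log k$ and $\arccos k$ in Corollary \ref{cor2.2}. If $k\subset \mathbf{C}-\mathbf{R}$, the generators of $k$ are genuine non-real exponentials, so the field generated by the $\alpha_i$ is by construction $\log k$, producing the first line of (\ref{eq1.1}). If $k\subset \mathbf{R}$, one must replace these exponentials by their real counterparts $\cos 2\pi\alpha_i\times\log\varepsilon$, and the corresponding field is $\arccos k$, yielding the second line; the preceding two steps provide $(\Lambda,[I])$ in both cases.

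The main obstacle, in my view, is the coherent identification at the very start: establishing that the Drinfeld module attached to $V(k)$ via the branched cover is well-defined up to isogeny, so that the resulting triple depends functorially on $V(k)$ rather than on auxiliary choices such as the prime $p$, the rank $r$, or the ideal $aA$. This amounts to matching the Galois action on the étale cover with the action of $GL_r(A/aA)$ on $\Lambda_\rho[a]$, a delicate compatibility controlled by the combination of item (iii) of \cite[Theorem 3.3]{Nik2} with \cite[Proposition 12.5]{R}. Once this functoriality is in place, the case split in (\ref{eq1.1}) follows directly from Corollary \ref{cor2.2} and the two lines of the conclusion become parallel applications of the same mechanism, the only distinction being whether one inverts the map $\alpha\mapsto e^{2\pi i\alpha+\log\log\varepsilon}$ or its real part.
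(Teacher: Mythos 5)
Your proposal runs the construction in the opposite direction from the paper, and the step you yourself flag as ``the main obstacle'' is precisely the gap: you never actually attach a well-defined Drinfeld module to a given $V(k)$. Item (iii) of \cite[Theorem 3.3]{Nik2} does not do this for you --- it only constrains the Galois group of the number field $\mathbf{Q}(F(\Lambda_\rho[a]))$ once a Drinfeld module is already in hand; it does not produce a module (nor a rank $r$, a prime $p$, or an ideal $aA$) from the geometry of a branched cover of $\mathbf{C}P^n$. Without that inverse construction, your identification of the third coordinate of $\mathscr{Q}(V(k))$ with $\log k$ (resp.\ $\arccos k$) has nothing to anchor it.

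The paper avoids this inverse problem entirely. It starts from the Drinfeld-module side: Lemma \ref{lm3.1} and Corollary \ref{cor3.2} classify isogenies by tuples $(m_1,\dots,m_r)$, Lemma \ref{lm3.3} turns these into radical extensions of $k$, and Corollary \ref{cor3.4} uses Namba's theorem to build a covering $V(k)\to\mathbf{C}P^n$ with branch indices $(m_1,\dots,m_r)$. This yields a map $\mathscr{P}:(\Lambda,[I],\log k)\to V(k)$ in the \emph{forward} direction. The formula (\ref{eq1.1}) is then extracted in Lemma \ref{lm3.5} by composing $\mathscr{P}$ with the map $\mathscr{Q}$ already supplied by Theorem \ref{thm2.5} and observing that the resulting diagram of Handelman triples commutes with the identity, which forces $K\cong\log k$. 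So the key device is the commutative diagram, not a Galois-theoretic matching of the cover against $GL_r(A/aA)$. A secondary issue: your claim that the refinement $\mathbf{Z}+\sum\frac{\alpha_i}{m_i}\mathbf{Z}$ ``leaves $(\Lambda,[I])$ invariant'' because it is commensurable with the original lattice is not what Lemma \ref{lm3.1} asserts; the lemma gives a proper inclusion of orders $\widetilde{\Lambda}\subseteq\Lambda$ and a coset condition on the ideal classes, so the pair genuinely changes under isogeny.
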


\bigskip
We start with  a brief review of the preliminary facts
 in Section 2. Theorem \ref{thm1.1} 
is proved in Section 3.  
The case of abelian varieties with complex multiplication is treated 
in Section 4.

\section{Preliminaries}
We briefly review noncommutative tori, Drinfeld modules and quantum arithmetic. 
We refer the reader to [Blackadar 1986] \cite{B},  [Rieffel 1990] \cite{Rie1},  [Rosen 2002] \cite[Chapters 12 \& 13]{R} 
and \cite{Nik1}  for a detailed exposition.

\subsection{Noncommutative geometry}
\subsubsection{$C^*$-algebras}
The $C^*$-algebra is an algebra  $\mathscr{A}$ over $\mathbf{C}$ with a norm 
$a\mapsto ||a||$ and an involution $\{a\mapsto a^* ~|~ a\in \mathscr{A}\}$  such that $\mathscr{A}$ is
complete with  respect to the norm, and such that $||ab||\le ||a||~||b||$ and $||a^*a||=||a||^2$ for every  $a,b\in \mathscr{A}$.  
Each commutative $C^*$-algebra is  isomorphic
to the algebra $C_0(X)$ of continuous complex-valued
functions on some locally compact Hausdorff space $X$. 
Any other  algebra $\mathscr{A}$ can be thought of as  a noncommutative  
topological space. 

\subsubsection{K-theory of $C^*$-algebras}
By $M_{\infty}(\mathscr{A})$ 
one understands the algebraic direct limit of the $C^*$-algebras 
$M_n(\mathscr{A})$ under the embeddings $a\mapsto ~\mathbf{diag} (a,0)$. 
The direct limit $M_{\infty}(\mathscr{A})$  can be thought of as the $C^*$-algebra 
of infinite-dimensional matrices whose entries are all zero except for a finite number of the
non-zero entries taken from the $C^*$-algebra $\mathscr{A}$.
Two projections $p,q\in M_{\infty}(\mathscr{A})$ are equivalent, if there exists 
an element $v\in M_{\infty}(\mathscr{A})$,  such that $p=v^*v$ and $q=vv^*$. 
The equivalence class of projection $p$ is denoted by $[p]$.   
We write $V(\mathscr{A})$ to denote all equivalence classes of 
projections in the $C^*$-algebra $M_{\infty}(\mathscr{A})$, i.e.
$V(\mathscr{A}):=\{[p] ~:~ p=p^*=p^2\in M_{\infty}(\mathscr{A})\}$. 
The set $V(\mathscr{A})$ has the natural structure of an abelian 
semi-group with the addition operation defined by the formula 
$[p]+[q]:=\mathbf{diag}(p,q)=[p'\oplus q']$, where $p'\sim p, ~q'\sim q$ 
and $p'\perp q'$.  The identity of the semi-group $V(\mathscr{A})$ 
is given by $[0]$, where $0$ is the zero projection. 
By the $K_0$-group $K_0(\mathscr{A})$ of the unital $C^*$-algebra $\mathscr{A}$
one understands the Grothendieck group of the abelian semi-group
$V(\mathscr{A})$, i.e. a completion of $V(\mathscr{A})$ by the formal elements
$[p]-[q]$.  The image of $V(\mathscr{A})$ in  $K_0(\mathscr{A})$ 
is a positive cone $K_0^+(\mathscr{A})$ defining  the order structure $\le$  on the  
abelian group  $K_0(\mathscr{A})$. The pair   $\left(K_0(\mathscr{A}),  K_0^+(\mathscr{A})\right)$
is known as a dimension group of the $C^*$-algebra $\mathscr{A}$.

\subsubsection{Noncommutative tori}
The $m$-dimensional noncommutative torus $\mathscr{A}_{\Theta}^m$ is the
universal $C^*$-algebra  generated by unitary operators $u_1,\dots, u_m$
satisfying the commutation relations 
\begin{equation}\label{eq2.1}
u_ju_i=e^{2\pi i \theta_{ij}} u_iu_j, \quad 1\le i,j\le m
\end{equation}
for a skew-symmetric matrix  $\Theta=(\theta_{ij})\in M_m(\mathbf{R})$
[Rieffel 1990] \cite{Rie1}. 
 It is known that 
  $K_0(\mathscr{A}_{\Theta}^m)\cong K_1(\mathscr{A}_{\Theta}^m)\cong \mathbf{Z}^{2^{m-1}}$.
The canonical trace $\tau$ on the $C^*$-algebra
$\mathscr{A}_{\Theta}^m$ defines a homomorphism from 
$K_0(\mathscr{A}_{\Theta}^m)$ to the real line $\mathbf{R}$;
under the homomorphism, the image of $K_0(\mathscr{A}_{\Theta}^m)$
is a $\mathbf{Z}$-module, whose generators $\tau=(\tau_i)$ are polynomials 
in $\theta_{ij}$.  The noncommutative
tori  $\mathscr{A}_{\Theta}^m$ and $\mathscr{A}_{\Theta'}^m$ are Morita
equivalent,  if  the matrices $\Theta$ and $\Theta'$
belong to the same orbit of a subgroup $SO(m,m~|~\mathbf{Z})$ of the
group $GL_{2m}(\mathbf{Z})$, which acts on $\Theta$ by the formula
$\Theta'=(A\Theta+B)~/~(C\Theta+D)$, where $(A, B,  C,  D)\in GL_{2m}(\mathbf{Z})$
and  the matrices $A,B,C,D\in GL_m(\mathbf{Z})$ satisfy the conditions
$A^tD+C^tB=I,\quad A^tC+C^tA=0=B^tD+D^tB$,
where $I$ is the unit matrix and $t$ at the upper right of a matrix 
means a transpose of the matrix.)  
The group $SO(m, m ~| ~\mathbf{Z})$ can be equivalently defined as a
subgroup of the group  $SO(m, m ~| ~\mathbf{R})$ consisting of linear transformations 
of the space $\mathbf{R}^{2m}$,  which 
preserve the quadratic form $x_1x_{m+1}+x_2x_{k+2}+\dots+x_kx_{2m}$.

\subsection{Non-abelian class field theory}
Let  $\mathfrak{k}:=\mathbf{F}_q(T)$ ($A:=\mathbf{F}_q[T]$, resp.) be the field of rational functions (the ring of polynomial functions, resp.)
in one variable $T$ over a finite field $\mathbf{F}_q$, where $q=p^n$
and let  $\tau_p(x)=x^p$. 
Recall that  the  Drinfeld module $Drin_A^{r}(\mathfrak{k})$   of rank $r\ge 1$
is a homomorphism
\begin{equation}\label{eq2.2}
\rho:  ~A\buildrel r\over\longrightarrow \mathfrak{k}\langle\tau_p\rangle
\end{equation}
given by a polynomial $\rho_a=a+c_1\tau_p+c_2\tau_p^2+\dots+c_r\tau_p^r$ with $c_i\in \mathfrak{k}$ and $c_r\ne 0$, 
such that for all $a\in A$ the constant term of $\rho_a$ is $a$ and 
$\rho_a\not\in \mathfrak{k}$ for at least one $a\in A$ [Rosen 2002] \cite[p. 200]{R}.
For each non-zero $a\in A$ the function 
field $\mathfrak{k}\left(\Lambda_{\rho}[a]\right)$  is a Galois extension of $\mathfrak{k}$,
such that its  Galois group is isomorphic to a subgroup $G$ of the matrix group $GL_r\left(A/aA\right)$,
where   $\Lambda_{\rho}[a]=\{\lambda\in\overline{ \mathfrak{k}} ~|~\rho_a(\lambda)=0\}$
is a torsion submodule of the non-trivial  Drinfeld module  $Drin_A^{r}(\mathfrak{k})$  [Rosen 2002] \cite[Proposition 12.5]{R}.
Clearly, the abelian extensions correspond to the case $r=1$.

Let $G$ be a  left cancellative  semigroup generated by $\tau_p$ and all  $a_i\in \mathfrak{k}$ subject to the commutation relations 
$\tau_p a_i=a_i^p\tau_p$.
\footnote{In other words, we omit the additive structure and consider a multiplicative semigroup of the ring $\mathfrak{k}\langle\tau_p\rangle$.  }
  Let $C^*(G)$ be the semigroup $C^*$-algebra [Li 2017] \cite{Li1}.  
For a Drinfeld module  $Drin_A^{r}(\mathfrak{k})$  defined  by  (\ref{eq2.2}) we consider a homomorphism of the semigroup $C^*$-algebras:  
\begin{equation}\label{eq2.3}
C^*(A)\buildrel r\over\longrightarrow C^*(\mathfrak{k}\langle\tau_p\rangle). 
\end{equation}
It is proved that (\ref{eq2.3}) defines a map  $F: Drin_A^{r}(\mathfrak{k})\mapsto \mathscr{A}_{RM}^{2r}$ \cite[Definition 3.1]{Nik2}. 
\begin{theorem}\label{thm2.1}
{\bf (\cite{Nik2})}
The following is true:

\medskip
(i) the map $F: Drin_A^{r}(\mathfrak{k})\mapsto \mathscr{A}_{RM}^{2r}$ is a functor 
from the category of Drinfeld  modules $\mathfrak{D}$ to a category 
of the noncommutative tori $\mathfrak{A}$,   which maps any pair of isogenous  (isomorphic, resp.) 
modules  $Drin_A^{r}(\mathfrak{k}), ~\widetilde{Drin}_A^{r}(\mathfrak{k})\in \mathfrak{D}$
to a pair of the homomorphic (isomorphic, resp.)  tori  $\mathscr{A}_{RM}^{2r}, \widetilde{\mathscr{A}}_{RM}^{2r}
\in \mathfrak{A}$;  

\smallskip
(ii) $F(\Lambda_{\rho}[a])=\{e^{2\pi i\alpha_i+\log\log\varepsilon} ~|~1\le i\le r\}$,
where $\mathscr{A}_{RM}^{2r}=F(Drin_A^{r}(\mathfrak{k}))$, 
$\alpha_i$ are generators of the Grothendieck semi-group $K_0^+(\mathscr{A}_{RM}^{2r})$,  $\log\varepsilon$ is a scaling factor
 and $\Lambda_{\rho}(a)$ is the  torsion submodule of the $A$-module $\overline{\mathfrak{k}_{\rho}}$;

\smallskip
(iii) the Galois group $Gal \left(k_0(e^{2\pi i\alpha_i+\log\log\varepsilon})  ~| ~k_0\right)\subseteq GL_{r}\left(A/aA\right)$,
where $k_0$ is a subfield of the number field $\mathbf{Q}(e^{2\pi i\alpha_i+\log\log\varepsilon})$. 
 \end{theorem}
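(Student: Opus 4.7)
The plan is to verify the three parts sequentially, passing from Drinfeld modules to noncommutative tori via Li's semigroup $C^*$-algebra construction (\ref{eq2.3}), and then using the dimension-group structure of the target together with Rosen's non-abelian class field theory to extract the image of the torsion submodule and the Galois group.

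For part (i), the first step is to describe the multiplicative semigroup $G$ underlying $\mathfrak{k}\langle\tau_p\rangle$, generated by $\tau_p$ and by the non-zero elements of $\mathfrak{k}$ modulo the relation $\tau_p a = a^p\tau_p$. The Drinfeld homomorphism $\rho$ descends to a semigroup homomorphism, and Li's functor then produces the $C^*$-algebra homomorphism (\ref{eq2.3}). To identify the image with $\mathscr{A}_{RM}^{2r}$, one polar-decomposes the isometric generators of $C^*(G)$: each of the $r$ non-constant coefficients $c_1,\dots,c_r$ of $\rho_a$, together with the Frobenius twist, contributes a pair of unitary generators, for a total of $2r$ unitaries $u_1,\dots,u_{2r}$. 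The commutation relations (\ref{eq2.1}) follow directly from $\tau_p a=a^p\tau_p$: extracting scalar phases $e^{2\pi i\theta_{ij}}$ from the Frobenius twisting produces a skew-symmetric matrix $\Theta$ whose entries are algebraic, giving a torus with real multiplication. Functoriality on morphisms reduces to the observation that an isogeny $f:Drin_A^r(\mathfrak{k})\to\widetilde{Drin}_A^r(\mathfrak{k})$, being a surjection with finite kernel, induces a surjective $*$-homomorphism of semigroup $C^*$-algebras with finite index at the $K_0^+$ level, while isomorphisms pull back to isomorphisms by the universality of Li's construction.

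For part (ii), the Rieffel--Blackadar computation of $K_0^+(\mathscr{A}_{RM}^{2r})\cong \mathbf{Z}+\alpha_1\mathbf{Z}+\dots+\alpha_r\mathbf{Z}$ identifies the generators $\alpha_i$ as trace values of the canonical projections. Each $\lambda\in\Lambda_\rho[a]$ satisfies $\rho_a(\lambda)=0$; under $F$ its image must be a unitary whose $K_0$-class lies in the trace image, hence equals some $\alpha_i$. Exponentiating back to the unitary group yields the factor $e^{2\pi i\alpha_i}$, while the additive correction $\log\log\varepsilon$ arises from rescaling by the fundamental unit $\varepsilon$ of the real number field generated by the $\alpha_i$, needed so that the image lands in the correct Morita class. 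For part (iii), Rosen's Proposition 12.5 gives $\mathrm{Gal}(\mathfrak{k}(\Lambda_\rho[a])/\mathfrak{k})\subseteq GL_r(A/aA)$; functoriality from (i) and the torsion formula from (ii) transport this inclusion to characteristic zero, with $k_0\subset\mathbf{Q}(e^{2\pi i\alpha_i+\log\log\varepsilon})$ taken to be the fixed field of $F(\mathfrak{k})$, and the action factors through $A/aA$ by the construction of $\rho_a$.

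The main obstacle is the precise identification in part (i) of the image of (\ref{eq2.3}) as a noncommutative torus \emph{with the correct real-multiplication property}: one must verify that the phases $\theta_{ij}$ extracted from the Frobenius twisting are algebraic of degree exactly $2r$ over $\mathbf{Q}$, rather than generic reals. This requires careful bookkeeping of exponents as one passes from characteristic $p$ to characteristic zero via norm-closure in the semigroup $C^*$-algebra, ensuring that the skew-symmetric matrix $\Theta$ generates an order in a real number field of the required rank; everything else in parts (ii) and (iii) is a formal consequence once this RM structure is in place.
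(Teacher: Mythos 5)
The paper does not prove Theorem \ref{thm2.1} at all: it is quoted verbatim from \cite{Nik2} (stated there as Theorem 3.3), and the present text offers only the remark that the homomorphism (\ref{eq2.3}) of semigroup $C^*$-algebras ``defines a map'' $F$. So there is no internal proof to compare your attempt against; what can be assessed is whether your reconstruction would stand on its own, and it does not. The decisive gap is the one you yourself flag as ``the main obstacle'' and then set aside: the passage from characteristic-$p$ data --- coefficients $c_i\in\mathfrak{k}=\mathbf{F}_{p^n}$, torsion points $\lambda\in\overline{\mathfrak{k}}$, the relation $\tau_p a=a^p\tau_p$ --- to complex phases $e^{2\pi i\theta_{ij}}$ with $\theta_{ij}$ algebraic of degree $2r$ over $\mathbf{Q}$. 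Nothing in the polar decomposition of isometries in $C^*(G)$ produces such phases: the relation $\tau_p a=a^p\tau_p$ is a relation among semigroup elements, not a scalar commutation relation of the form (\ref{eq2.1}), and no mechanism is given by which a finite-field Frobenius twist acquires a real, let alone algebraic, rotation number. This is not ``careful bookkeeping of exponents''; it is the entire content of part (i), and without it parts (ii) and (iii) have nothing to stand on.

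Three further steps would fail as written. First, Li's semigroup $C^*$-algebra construction is not functorial for arbitrary semigroup homomorphisms; an isogeny has finite kernel, so the induced map on multiplicative semigroups is not injective, and the asserted surjective $*$-homomorphism $C^*(G)\to C^*(\widetilde G)$ needs a universal-property argument you do not supply. Second, in part (ii) you place the image of a torsion point $\lambda$ in $K_0$ via ``trace values of canonical projections'' and then exponentiate; but unitaries classify $K_1$, not $K_0$, and the additive term $\log\log\varepsilon$ is asserted to come from ``rescaling by the fundamental unit'' without any computation showing why that particular transcendental-looking correction is forced, or why the resulting number $e^{2\pi i\alpha_i+\log\log\varepsilon}$ is algebraic at all (which part (iii) silently requires when it speaks of the number field $\mathbf{Q}(e^{2\pi i\alpha_i+\log\log\varepsilon})$). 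Third, part (iii) transports $Gal(\mathfrak{k}(\Lambda_\rho[a])\,|\,\mathfrak{k})\subseteq GL_r(A/aA)$ to an extension of number fields by invoking ``functoriality,'' but that requires $F$ to be Galois-equivariant on $\overline{\mathfrak{k}}$, a property never defined, let alone verified. If you want to use this theorem, cite \cite{Nik2} as the paper does; if you want to prove it, the construction of the map $F$ itself --- not its formal consequences --- is where all the work lies.
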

Theorem \ref{thm2.1} implies a non-abelian class field theory as follows.
Fix a non-zero $a\in A$ and let $G:=Gal~(\mathfrak{k}(\Lambda_{\rho}[a]) ~|~ \mathfrak{k})\subseteq GL_r(A/aA)$,
where  $\Lambda_{\rho}[a]$ is the torsion submodule of the $A$-module  $\overline{\mathfrak{k}_{\rho}}$.
Consider the number field $k=\mathbf{Q}(F(\Lambda_{\rho}[a]))$. 
Denote by $k_0$ the maximal subfield of $k$ which is fixed by the action of 
all elements of the group $G$. 
\begin{corollary}\label{cor2.2} 
{\bf (Non-abelian class field theory)} 
The number field
\begin{equation}\label{eq2.4}
k\cong
\begin{cases} k_0\left(e^{2\pi i\alpha_i +\log\log\varepsilon}\right), & if ~k_0\subset\mathbf{C} - \mathbf{R},\cr
               k_0\left(\cos 2\pi\alpha_i \times\log\varepsilon\right), & if ~k_0\subset\mathbf{R},
\end{cases}               
\end{equation}
is a Galois extension of  $k_0$,
 such that  $Gal~(k |k_0 )\cong G$.
\end{corollary}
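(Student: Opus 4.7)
Since the statement is labelled a corollary, my plan is to assemble it from the three clauses of Theorem~\ref{thm2.1} rather than to develop new machinery. First I would apply Theorem~\ref{thm2.1}(ii) to rewrite
\[
k \;=\; \mathbf{Q}\bigl(F(\Lambda_\rho[a])\bigr) \;=\; \mathbf{Q}\bigl(e^{2\pi i\alpha_i+\log\log\varepsilon}\,:\,1\le i\le r\bigr),
\]
so that once $k_0\subset k$ is fixed, the extension $k/k_0$ is obtained by adjoining precisely these exponential generators. In the complex case $k_0\subset\mathbf{C}-\mathbf{R}$ this already yields the first branch of (\ref{eq2.4}) as a tautology.

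Next I would identify the Galois group. The functoriality of $F$ (Theorem~\ref{thm2.1}(i)) turns each automorphism of $\mathfrak{k}(\Lambda_\rho[a])/\mathfrak{k}$ in $G=Gal(\mathfrak{k}(\Lambda_\rho[a])\,|\,\mathfrak{k})$ into a ring automorphism of $k$ that permutes the set $F(\Lambda_\rho[a])$; combined with Theorem~\ref{thm2.1}(iii) and the classical embedding $G\hookrightarrow GL_r(A/aA)$ quoted from Rosen, the two images of $G$ inside $GL_r(A/aA)$ coincide. Since $k_0$ is by construction the maximal subfield of $k$ pointwise fixed by this induced $G$-action, the usual Galois correspondence delivers $Gal(k\,|\,k_0)\cong G$.

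The main obstacle is the real branch of (\ref{eq2.4}): when $k_0\subset\mathbf{R}$ one cannot literally adjoin the non-real complex exponential and remain in $\mathbf{R}$, so the generator supplied by $F$ must be replaced by its real shadow. Since $\log\varepsilon$ is a real scaling factor and the generators $\alpha_i$ of $K_0^+(\mathscr{A}_{RM}^{2r})\subset\mathbf{R}$ are real algebraic integers, the natural real incarnation of $e^{2\pi i\alpha_i+\log\log\varepsilon}$ is the product $\cos(2\pi\alpha_i)\cdot\log\varepsilon$. I would verify that this real quantity generates the same extension $k/k_0$ by showing that complex conjugation (which must fix $k_0$) acts trivially on $\{\cos(2\pi\alpha_i)\cdot\log\varepsilon\}$ while inducing the same orbit structure on torsion points as the exponential form did. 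With this substitution in hand, the Galois argument of the previous paragraph transports verbatim to yield the second branch; justifying the substitution in a way that is compatible with the functor $F$ restricted to real spectral data is the only genuinely non-formal step of the proof.
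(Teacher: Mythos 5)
The paper offers no proof of Corollary~\ref{cor2.2} at all: it is presented as an immediate consequence of Theorem~\ref{thm2.1} together with the preceding definitions ($k=\mathbf{Q}(F(\Lambda_{\rho}[a]))$ and $k_0$ the maximal subfield of $k$ fixed by $G$), with the Galois-theoretic content outsourced to \cite{Nik2} via Theorem~\ref{thm2.1}(iii). Your plan follows exactly this intended route --- rewrite $k$ using clause (ii), identify $\mathrm{Gal}(k|k_0)$ with $G$ using clauses (i) and (iii) and the fixed-field definition of $k_0$ --- so in that sense you have reconstructed the argument the paper takes for granted.

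Two remarks. First, the step you single out as ``the only genuinely non-formal step'' --- justifying the replacement of $e^{2\pi i\alpha_i+\log\log\varepsilon}$ by $\cos(2\pi\alpha_i)\times\log\varepsilon$ in the real branch --- is in fact an elementary identity rather than a substitution needing spectral justification: since $e^{\log\log\varepsilon}=\log\varepsilon$, one has $e^{2\pi i\alpha_i+\log\log\varepsilon}=(\cos 2\pi\alpha_i+i\sin 2\pi\alpha_i)\log\varepsilon$, so $\cos(2\pi\alpha_i)\times\log\varepsilon$ is literally the real part of the generator; this is visibly the source of the second branch of (\ref{eq2.4}) (compare the parallel use of $\Re$ in the proof of Lemma~\ref{lm3.3}). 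Second, your identification of $\mathrm{Gal}(k|k_0)$ with $G$ quietly assumes that $G$ acts on the number field $k$ by field automorphisms and does so faithfully; ``the two images of $G$ inside $GL_r(A/aA)$ coincide'' does not follow from Theorem~\ref{thm2.1}(iii) alone, which only gives a containment $\mathrm{Gal}\subseteq GL_r(A/aA)$. That faithful transport of the Galois action through $F$ is precisely the nontrivial content delegated to \cite{Nik2}, so you should cite it as an input rather than present it as a consequence of functoriality; as written, that paragraph risks circularity (the fixed field of an action that is not yet known to be by automorphisms of $k$ need not produce a Galois extension with the claimed group).
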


\subsection{Quantum arithmetic}
Let $V$ be an $n$-dimensional projective variety over the field of complex numbers $\mathbf{C}$.
 Recall \cite[Section 5.3.1]{N} that the Serre $C^*$-algebra  $\mathscr{A}_V$
 is  the norm closure of a self-adjoint representation of the twisted 
 homogeneous coordinate ring of  $V$  by the bounded linear operators acting on a Hilbert space;
 we refer the reader to  [Stafford \& van ~den ~Bergh 2001] \cite{StaVdb1} or Section 2.1 for the details.  
 Let $(K_0(\mathscr{A}_V), K_0^+(\mathscr{A}_V))$ be a dimension group 
 of the $C^*$-algebra $\mathscr{A}_V$ [Blackadar 1986] \cite[Section 6.1]{B}. 
  The triple  $(\Lambda, [I], K)$ stays for a dimension group generated by
   the ideal class $[I]$ of an order $\Lambda\subseteq O_K$ 
  in the ring of integers of a number field $K$ [Effros 1981] \cite[Chapter 6]{E},
  [Handelman 1981] \cite{Han1} or \cite[Theorem 3.5.4]{N}.  
  \begin{definition}\label{dfn1.1}
  The Serre $C^*$-algebra $\mathscr{A}_V$ is said to have real multiplication  by 
  the triple
   $(\Lambda, [I], K)$,   if  there exists  an isomorphism of the dimension group
   $(K_0(\mathscr{A}_V), K_0^+(\mathscr{A}_V))\cong (\Lambda, [I], K)$,
   where   $\Lambda\subseteq O_K$ is an order,   $[I]\subset\Lambda$ is  an ideal class  and 
   $K$ is  a  number field. 
  \end{definition}
\begin{example}\label{exm2.4}
{\bf (\cite{Nik1})}
Let $V\cong\mathscr{E}_{CM}$ be an elliptic curve with complex multiplication by the triple $(L, [I], k)$,
where $L=\mathbf{Z}+fO_k$ is an order of conductor $f\ge 1$ in the ring $O_k$ of an
imaginary quadraitic field $k\cong \mathbf{Q}(\sqrt{-d})$ and $[I]$ an ideal class in $L$.   
Then  $\mathscr{A}_V\cong \mathscr{A}_{RM}$ is a noncommutative torus with real 
multiplication by the triple $(\Lambda, [I], K)$,
where $[I]$ an ideal class in the order $L=\mathbf{Z}+f'O_k$ of conductor $f'\ge 1$ in the ring  $O_k$ of an
imaginary quadraitic field $K\cong\mathbf{Q}(\sqrt{d})$, such that 
$f'$ is the least integer satisfying a group isomorphism
$Cl(\mathbf{Z}+f'O_{K})\cong Cl(\mathbf{Z}+fO_{k})$,
where  $Cl (R)$ is the class group of the ring $R$. 
\end{example}

Let $\mathscr{M}_V$ be the deformation moduli space of variety $V$ and $m=\dim_{\mathbf{C}} \mathscr{M}_V$. 
 Denote by $O_K$ the ring of integers of a number field  $K$ of degree 
  $\deg ~(K|\mathbf{Q})=2m$. 
\begin{theorem}\label{thm2.5}
{\bf (\cite{Nik1})}
 The Serre $C^*$-algebra $\mathscr{A}_V$ has real multiplication 
by a triple $(\Lambda, [I], K)$, if and only if, 
the projective variety $V$ is defined over a number field $k$.
\end{theorem}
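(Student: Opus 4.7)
The plan is to prove the biconditional by exploiting the correspondence between the moduli of $V$ and the dimension group of its Serre $C^*$-algebra. Recall from Section 2.1 that $\mathscr{A}_V$ is the norm closure of a self-adjoint representation of the twisted homogeneous coordinate ring of $V$, and the twist depends on a point $\theta\in \mathscr{M}_V$ of the deformation moduli space. The canonical trace $\tau$ on $\mathscr{A}_V$ sends $K_0(\mathscr{A}_V)$ into $\mathbf{R}$, and the image is a $\mathbf{Z}$-module of rank $2m=2\dim_{\mathbf{C}}\mathscr{M}_V$ whose generators $\tau_1(\theta),\dots,\tau_{2m}(\theta)$ are polynomial functions of the entries of $\theta$. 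The dimension pair $(K_0(\mathscr{A}_V), K_0^+(\mathscr{A}_V))$ is therefore determined by these numerical invariants, so the theorem reduces to a statement about when the $\tau_i(\theta)$ are algebraic and organize themselves as an ideal class in an order of a number field.

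First I would establish the implication $V$ defined over $k \Rightarrow$ real multiplication. If $V$ is defined over a number field $k$, then after a normalization the moduli point $\theta$ has coordinates in $\overline{\mathbf{Q}}$: the twisted coordinate ring is assembled from the (algebraic) defining equations of $V$, so each entry $\theta_{ij}$ is algebraic. Consequently $\tau_1(\theta),\dots,\tau_{2m}(\theta)$ are algebraic numbers, and they span a number field $K$ with $\deg(K|\mathbf{Q})\le 2m$; a rank count using their $\mathbf{Z}$-linear independence forces equality. The positive cone $K_0^+(\mathscr{A}_V)$ is then a shift-invariant subsemigroup of $K$, and by Handelman's theorem \cite{Han1} it realizes an ideal class $[I]$ in some order $\Lambda\subseteq O_K$. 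This gives the required isomorphism $(K_0(\mathscr{A}_V), K_0^+(\mathscr{A}_V))\cong (\Lambda,[I],K)$.

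Conversely, suppose $(K_0(\mathscr{A}_V), K_0^+(\mathscr{A}_V))\cong (\Lambda,[I],K)$ with $K$ a number field. Then each trace generator $\tau_i(\theta)$ is algebraic. Since $\tau_1,\dots,\tau_{2m}$ are polynomial maps $\mathscr{M}_V\to\mathbf{R}$ whose generic joint fiber is finite and since the real dimension of $\mathscr{M}_V$ is $2m$, the system $\tau_i(\theta)=\tau_i(\theta_0)$ cuts out a zero-dimensional algebraic subset of $\mathscr{M}_V$ containing $\theta$. Hence $\theta$ is an algebraic point of $\mathscr{M}_V(\overline{\mathbf{Q}})$. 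A standard descent argument (Weil's criterion of descent, combined with the specialization arguments in the spirit of Shimura's theory used in Section~4 of the paper) then produces a model of $V$ defined over some number field $k$.

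The main obstacle is the second direction, and specifically the descent step: one must argue that algebraicity of $\theta\in\mathscr{M}_V$ entails the existence of a model of $V$ over a number field, uniformly for all projective $V$. For abelian varieties this is classical (which is why the CM case is singled out in Section~4), but in general one needs that the deformation functor $\mathscr{M}_V$ is representable enough for algebraic moduli to correspond to algebraic polarized schemes. A secondary, purely $C^*$-algebraic obstacle is verifying that $K_0^+(\mathscr{A}_V)$ is a stationary dimension group, so that Handelman's classification legitimately supplies the pair $(\Lambda,[I])$ in the first direction; this is where the specific polynomial form of the trace generators $\tau_i$ in terms of $\theta_{ij}$ becomes essential.
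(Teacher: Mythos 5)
First, a point of orientation: the paper you are reading does not prove Theorem~\ref{thm2.5} at all --- it is quoted verbatim from \cite{Nik1}, and the introduction explicitly says that the existence of the functor $\mathscr{Q}$ was established there \emph{by contradiction} (\cite[Remark 1.4]{Nik1}), precisely because no direct computation of the kind you attempt is available. So your proposal is not a reconstruction of the paper's argument but an independent direct attack, and it has a genuine gap in the forward direction. Algebraicity of the trace generators $\tau_1(\theta),\dots,\tau_{2m}(\theta)$ does \emph{not} yield real multiplication in the sense of Definition~\ref{dfn1.1}. That definition requires the dimension group to be order-isomorphic to an ideal class $[I]$ in an order $\Lambda\subseteq O_K$, which forces the $\mathbf{Z}$-module $\mathbf{Z}+\tau_1\mathbf{Z}+\cdots+\tau_{2m}\mathbf{Z}$ to be a full module in $K$ stable under multiplication by a nontrivial order. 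Generic algebraic numbers fail this: $\mathbf{Z}+2^{1/3}\mathbf{Z}$ has endomorphism ring $\mathbf{Z}$ and is not an ideal in any order of $\mathbf{Q}(2^{1/3})$; already for $m=1$ a noncommutative torus $\mathscr{A}_\theta$ with $\theta$ a cubic (rather than quadratic) irrationality has no real multiplication even though $\theta$ is algebraic. Your auxiliary claim that ``a rank count using their $\mathbf{Z}$-linear independence forces $\deg(K|\mathbf{Q})=2m$'' is also false: $1,\sqrt{2},\sqrt{3}$ are $\mathbf{Z}$-linearly independent but generate a field of degree $4$, not $3$. You flag the Handelman step as a ``secondary obstacle,'' but it is in fact the entire content of the theorem --- the passage from ``$V$ has algebraic defining data'' to ``the positive cone is an ideal class in an order'' is exactly what cannot be done by inspection, which is why \cite{Nik1} argues indirectly.

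The reverse direction is closer to being salvageable but still leans on two unproved assertions: that the $\tau_i$ have generically finite joint fibers on the (local, analytic) Kuranishi space $\mathscr{M}_V$, and that an algebraic point of this deformation space yields a model of $V$ over a number field via descent. Neither is automatic for an arbitrary projective variety, and the paper gives you no tools for either. If you want a proof consistent with the surrounding text, you should either reproduce the contradiction argument of \cite[Remark 1.4]{Nik1} or restrict to the CM case, where Example~\ref{exm2.4} and Section~4 supply the explicit correspondence.
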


\section{Proof of Theorem \ref{thm1.1}}
The idea of proof was outlined in Section 1.  For the sake of clarity, let us
review the main steps. 
Each isogeny of the Drinfeld module $Drin_A^{r}(\mathfrak{k})$ generates
an endomorphism of the Grothendieck semi-group 
$K_0^+(\mathscr{A}_{RM}^{2r})\cong \mathbf{Z}+\alpha_1\mathbf{Z}+\dots+
\alpha_{r}\mathbf{Z}\subset \mathbf{R}$,  where $\mathscr{A}_{RM}^{2r}=F(Drin_A^{r}(\mathfrak{k}))$
(Lemma \ref{lm3.1}). 
These endomorphisms are classified by the $r$-tuples of integers $m_i\ge 1$ (Corollary \ref{cor3.2}). 
In particular,  each isogeny defines an extension of the field   $k=\mathbf{Q}(e^{2\pi i\alpha_i+\log\log\varepsilon})$
by the $r$ roots of degree $m_i$ (Lemma \ref{lm3.3}). 
Using results of [Namba 1985] \cite[Theorem 5]{Nam1} we construct a branched covering $V(k)\to \mathbf{C}P^n$
 of the $n$-dimensional projective space $\mathbf{C}P^n$ (Corollary \ref{cor3.4}). 
The $V(k)$ is proved to satisfy equation (\ref{eq1.1}) (Lemma \ref{lm3.5}). 
We pass to a detailed argument. 
\begin{lemma}\label{lm3.1}
 Drinfeld modules $Drin_A^{r}(\mathfrak{k})$ and  $\widetilde{Drin}_A^{r}(\mathfrak{k})$
 are isogenous, if and only if, $\widetilde{\Lambda}\subseteq \Lambda$, $[I]\subseteq\widetilde{[I]}$
 and $K\cong\widetilde{K}$, where $\Lambda$ ($\widetilde{\Lambda}$, resp.) is the endomorphism ring of $K_0^+(F(Drin_A^{r}(\mathfrak{k})))\cong \mathbf{Z}+\sum_{k=1}^{r}\alpha_i\mathbf{Z}$
  ($K_0^+(F(\widetilde{Drin_A}^{r}(\mathfrak{k})))$, resp.),  $[I]\subseteq\widetilde{[I]}$ are the ideal classes of orders  $\Lambda$ and $\widetilde{\Lambda}$
   belonging to  the same coset in the  inclusion of the corresponding ideal class groups 
  and  $K=\mathbf{Q}(\alpha_i)$.
    \end{lemma}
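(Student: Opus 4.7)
The plan is to prove both directions by applying the functor $F$ of Theorem \ref{thm2.1} and translating the isogeny relation on Drinfeld modules into order-theoretic data on the Grothendieck semigroup of the corresponding noncommutative torus with real multiplication. The strategy is to use the fact that a noncommutative torus $\mathscr{A}_{RM}^{2r}$ with real multiplication is determined up to stable isomorphism by its dimension group $(K_0(\mathscr{A}_{RM}^{2r}), K_0^+(\mathscr{A}_{RM}^{2r}))$, and that the latter has the Handelman normal form $(\Lambda, [I], K)$ recalled in Definition \ref{dfn1.1}.

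First I would prove the forward implication. Suppose $f: Drin_A^{r}(\mathfrak{k})\to \widetilde{Drin}_A^{r}(\mathfrak{k})$ is an isogeny. By Theorem \ref{thm2.1}(i), $F(f)$ is a $*$-homomorphism of noncommutative tori $\mathscr{A}_{RM}^{2r}\to \widetilde{\mathscr{A}}_{RM}^{2r}$, which induces an order-preserving group homomorphism of dimension groups. After applying the canonical trace to embed both groups in $\mathbf{R}$, the image of $K_0^+(\mathscr{A}_{RM}^{2r})\cong \mathbf{Z}+\sum_{i=1}^{r}\alpha_i\mathbf{Z}$ sits inside $K_0^+(\widetilde{\mathscr{A}}_{RM}^{2r})$. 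Since both groups are rank-$(r+1)$ subgroups of $\mathbf{R}$ whose generators are algebraic integers of degree $2r$, comparing traces forces $K_0^+(\widetilde{\mathscr{A}}_{RM}^{2r})$ to be generated by rescaled copies of the same $\alpha_i$ — so that $\mathbf{Q}(\alpha_i)\cong \mathbf{Q}(\widetilde\alpha_i)$, giving $K\cong \widetilde K$. The endomorphism ring of a sub-lattice of $K$ is an order in $K$, and the inclusion $K_0^+(\mathscr{A}_{RM}^{2r})\subseteq K_0^+(\widetilde{\mathscr{A}}_{RM}^{2r})$ translates, by Handelman's classification, into the reverse inclusion of endomorphism rings $\widetilde\Lambda\subseteq \Lambda$. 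The induced map between class groups $Cl(\Lambda)\to Cl(\widetilde\Lambda)$ carries $[I]$ into a class lying in the coset $\widetilde{[I]}$, which I record as $[I]\subseteq\widetilde{[I]}$.

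For the converse, given the three conditions I would reverse the construction: the data $(\Lambda,[I],K)\supseteq (\widetilde\Lambda,\widetilde{[I]},\widetilde K)$ (in the ordered sense specified in the statement) determines a morphism of dimension groups, hence, by the Elliott-type classification recalled in Section 2.1.3 for noncommutative tori with real multiplication, a $*$-homomorphism $\mathscr{A}_{RM}^{2r}\to \widetilde{\mathscr{A}}_{RM}^{2r}$ up to stable isomorphism. Applying the essential surjectivity of $F$ on morphisms (a consequence of Theorem \ref{thm2.1}(i) together with the explicit construction of $F$ via the semigroup $C^*$-algebras in (\ref{eq2.3})), this tori homomorphism lifts to a surjective morphism of Drinfeld modules. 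The finiteness of the kernel follows from the finiteness of the index $[\Lambda:\widetilde\Lambda]$ (equivalently, from the finiteness of the quotient $K_0^+(\widetilde{\mathscr{A}}_{RM}^{2r})/K_0^+(\mathscr{A}_{RM}^{2r})$), yielding the desired isogeny.

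The principal obstacle will be the last step of the converse: showing that a morphism of dimension groups with the prescribed order-theoretic structure actually lifts to a Drinfeld isogeny, rather than only to a homomorphism of the ambient noncommutative tori. This requires knowing that the functor $F$ is faithful and essentially surjective on morphism sets, which is available from the semigroup $C^*$-algebra construction in \cite{Nik2}; most of the effort is careful bookkeeping on ideal classes and the index $[\Lambda:\widetilde\Lambda]$, rather than new $C^*$-algebraic input.
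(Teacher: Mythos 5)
Your forward direction follows the paper's proof essentially step for step: apply the functor $F$ of Theorem \ref{thm2.1}(i) to get a homomorphism of noncommutative tori, pass to the induced order-homomorphism of Grothendieck semi-groups, and read off the field isomorphism $K\cong\widetilde K$, the inclusion of endomorphism rings $\widetilde\Lambda\subseteq\Lambda$, and the coset condition on $[I]$ and $\widetilde{[I]}$ from the resulting inclusion of $\mathbf{Z}$-modules in $\mathbf{R}$. (One small discrepancy: you place the image of $K_0^+(\mathscr{A}_{RM}^{2r})$ inside $K_0^+(\widetilde{\mathscr{A}}_{RM}^{2r})$, which is what a homomorphism $\mathscr{A}_{RM}^{2r}\to\widetilde{\mathscr{A}}_{RM}^{2r}$ naturally gives, whereas the paper asserts the opposite inclusion $K_0^+(\widetilde{\mathscr{A}}_{RM}^{2r})\subseteq K_0^+(\mathscr{A}_{RM}^{2r})$ before concluding $\widetilde\Lambda\subseteq\Lambda$; you arrive at the same conclusion on the orders, so this is a bookkeeping issue inherited from the source rather than a flaw of yours.) The real divergence is in the converse: the paper disposes of it with ``established likewise and the proof is left to the reader,'' while you sketch an actual argument via the Elliott-type classification and a lifting of the dimension-group morphism to a Drinfeld isogeny. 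You are right to flag that lifting step as the principal obstacle: Theorem \ref{thm2.1}(i) as stated only says that $F$ sends isogenies to homomorphisms, not that every homomorphism of tori (or every morphism of Handelman triples) arises from an isogeny, so your converse rests on fullness of $F$ on morphisms, which is asserted but not proved in the paper or in the cited reference as quoted here. Since the paper supplies no argument at all for this direction, your proposal is not weaker than the source; just be aware that the converse remains conditional on that unproven property of $F$.
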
 
\begin{proof}
(i) In view of item (i) of Theorem \ref{thm2.1}, any pair of isogenous Drinfeld modules 
 $Drin_A^{r}(\mathfrak{k})$ and  $\widetilde{Drin}_A^{r}(\mathfrak{k})$
defines  a homomorphism of the $C^*$-algebras  $h: \mathscr{A}_{RM}^{2r} \to \widetilde{\mathscr{A}}_{RM}^{2r}$,
where  $\mathscr{A}_{RM}^{2r}=F(Drin_A^{r}(\mathfrak{k}))$ and  $\widetilde{\mathscr{A}}_{RM}^{2r}=F(\widetilde{Drin}_A^{r}(\mathfrak{k}))$. 

\medskip
(ii) Consider an induced order-homomorphism $h_*: K_0^+(\mathscr{A}_{RM}^{2r})\to K_0^+(\widetilde{\mathscr{A}}_{RM}^{2r})$
of the Grothendieck semi-groups of  $\mathscr{A}_{RM}^{2r}$ and  $\widetilde{\mathscr{A}}_{RM}^{2r}$ \cite[Section 6]{B}.

\medskip
(iii) Recall that $K_0^+(\mathscr{A}_{RM}^{2r})\cong \mathbf{Z}+\alpha_1\mathbf{Z}+\dots+
\alpha_{r}\mathbf{Z}\subset \mathbf{R}$ and $K_0^+(\widetilde{\mathscr{A}}_{RM}^{2r})\cong \mathbf{Z}+\widetilde{\alpha}_1\mathbf{Z}+\dots+
\widetilde{\alpha}_{r}\mathbf{Z}\subset \mathbf{R}$, where $\alpha_i$ and  $\widetilde{\alpha}_k$ are algebraic numbers of degree $2r$
over $\mathbf{Q}$. 
In view of (ii),  one gets $K_0^+(\widetilde{\mathscr{A}}_{RM}^{2r})\subseteq K_0^+(\mathscr{A}_{RM}^{2r})$ and, therefore,
$K\cong \widetilde{K}$, where $K=\mathbf{Q}(\alpha_i)$ and $\widetilde{K}=\mathbf{Q}(\widetilde{\alpha}_k)$. 

\medskip
(iv) On the other hand, we have $\Lambda:=End~ (K_0^+(\mathscr{A}_{RM}^{2r}))$ and 
$\widetilde{\Lambda}:=End~ (K_0^+(\widetilde{\mathscr{A}}_{RM}^{2r}))$, where $End$ is the ring of endomorphisms
of the respective Grothendieck semi-groups.  Since  $K_0^+(\widetilde{\mathscr{A}}_{RM}^{2r})\subseteq K_0^+(\mathscr{A}_{RM}^{2r})$,
one gets an inclusion of the rings $\widetilde{\Lambda}\subseteq\Lambda$. 

\medskip
(v) Recall that $\Lambda$ and $\widetilde{\Lambda}$ are orders in the ring of integers $O_K$ of the number field $K=\mathbf{Q}(\alpha_i)$. 
Denote by $Cl ~(\Lambda)$ and $Cl~(\widetilde{\Lambda})$ the respective ideal class groups. Since  $\widetilde{\Lambda}\subseteq\Lambda$,
one gets a reverse inclusion of the finite abelian groups  $Cl ~(\Lambda)\subseteq Cl~(\widetilde{\Lambda})$. 
If $[I]\in Cl~(\Lambda)$, then $[I]\in Cl~(\widetilde{\Lambda})$ and the corresponding element  $\widetilde{[I]}\in Cl~(\widetilde{\Lambda})$
must be in the same coset with $[I]$ defined by the subgroup $Cl ~(\Lambda)$ of the ideal class group $Cl~(\widetilde{\Lambda})$.

\medskip
(vi) The necessary conditions of Lemma \ref{lm3.1} are established likewise and the proof is left to the reader. 

\bigskip
Lemma \ref{lm3.1} is proved. 
\end{proof}

\begin{corollary}\label{cor3.2}
The set of all isogenies of the Drinfeld modules $Drin_A^{r}(\mathfrak{k})\to\widetilde{Drin}_A^{r}(\mathfrak{k})$
is bijective with the integer tuples $\{(m_1,\dots, m_{r}) ~|~m_i\ge 1\}$. Moreover, the composition of isogenies correspond to a 
component-wise multiplication of the tuples.
 \end{corollary}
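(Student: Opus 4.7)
The plan is to invoke Lemma \ref{lm3.1} to translate each isogeny into a lattice inclusion of Grothendieck semigroups, and then read off the integer parameters from the diagonal form of that inclusion. By Lemma \ref{lm3.1} an isogeny $f:Drin_A^{r}(\mathfrak{k})\to\widetilde{Drin}_A^{r}(\mathfrak{k})$ is completely encoded in the inclusion $\widetilde{\Lambda}\subseteq\Lambda$ of endomorphism orders inside the common number field $K\cong\widetilde{K}=\mathbf{Q}(\alpha_i)$, together with the corresponding inclusion of positive cones
\begin{equation*}
\mathbf{Z}+\alpha_1\mathbf{Z}+\dots+\alpha_{r}\mathbf{Z} \ \subseteq \ \mathbf{Z}+\widetilde{\alpha}_1\mathbf{Z}+\dots+\widetilde{\alpha}_{r}\mathbf{Z}.
\end{equation*}

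First I would argue that this inclusion is diagonal in the chosen bases. Both semigroups have the same $\mathbf{Z}$-rank $r+1$ and span the same field $K$, and by Theorem \ref{thm2.1} the generators $\alpha_i$ correspond coefficient-by-coefficient to the $c_1,\dots,c_r$ of the Drinfeld polynomial $\rho_a$. Since an isogeny acts on $\rho_a$ in a coefficient-wise fashion, its image under $F$ forces a diagonal relation $\widetilde{\alpha}_i=\alpha_i/m_i$ with $m_i\in\mathbf{N}$, exactly as announced in Section 1. This produces the assignment $f\mapsto(m_1,\dots,m_r)$ from isogenies to positive integer tuples.

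Bijectivity then breaks into two routine checks. Injectivity is immediate: distinct tuples yield distinct enlargements of $K_0^+(\mathscr{A}_{RM}^{2r})$, hence non-isogenous targets by Lemma \ref{lm3.1}. For surjectivity, given a tuple $(m_1,\dots,m_r)$ I would form the semigroup $\mathbf{Z}+\sum_i(\alpha_i/m_i)\mathbf{Z}$ and invoke Theorem \ref{thm2.1} to realize it as $K_0^+(\widetilde{\mathscr{A}}_{RM}^{2r})$ for a noncommutative torus that lifts under $F$ to a Drinfeld module isogenous to the source. For the composition statement, a pair of successive isogenies labelled by $(m_i)$ and $(\widetilde{m}_i)$ induces the chain $\alpha_i \to \alpha_i/m_i \to \alpha_i/(m_i\widetilde{m}_i)$, which records exactly the component-wise product.

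The main obstacle is justifying the diagonal form of the inclusion, i.e.\ ruling out the possibility that $\widetilde{\alpha}_i$ is a general $\mathbf{Q}$-linear combination of the $\alpha_j$ rather than a single rational multiple of $\alpha_i$. This diagonality is a consequence of the coefficient-by-coefficient construction of the functor $F$ in \cite{Nik2}, where the $i$-th generator $\alpha_i$ is rigidly attached to the $i$-th coefficient of the defining polynomial $\rho_a$. Once diagonality is secured, integrality of the $m_i$ follows from the order inclusion $\widetilde{\Lambda}\subseteq\Lambda$ delivered by Lemma \ref{lm3.1}, and the remaining bijection and multiplicativity claims are then formal.
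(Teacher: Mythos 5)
Your proposal follows essentially the same route as the paper: reduce via Lemma \ref{lm3.1} to an inclusion of orders, assert that the inclusion is diagonal in the basis $\{1,\alpha_1,\dots,\alpha_r\}$, i.e.\ $\widetilde{\alpha}_i=\alpha_i/m_i$ with $m_i\in\mathbf{N}$, and read off the composition law from the nested inclusions $\widetilde{\widetilde{\Lambda}}\subseteq\widetilde{\Lambda}\subseteq\Lambda$. The one substantive difference is how the diagonal form is justified. The paper disposes of it in a single clause by citing Borevich--Shafarevich \cite[p.~88]{BS}, whereas you appeal to an alleged coefficient-by-coefficient correspondence between the generators $\alpha_i$ and the coefficients $c_i$ of $\rho_a$ under the functor $F$; that correspondence is not established in Theorem \ref{thm2.1} or anywhere else in the paper, so as written it is an unsupported assertion rather than an argument. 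You are right, though, to isolate this step as the crux: for a lattice of rank $r+1$ with $r>1$, a sublattice need not be diagonal in a \emph{fixed} basis (Smith normal form generally requires changing bases on both sides), so some input beyond pure lattice theory is genuinely required here, and the paper's citation is the device doing that work. Your explicit injectivity and surjectivity checks, and the realization of an arbitrary tuple $(m_1,\dots,m_r)$ via Theorem \ref{thm2.1}, go beyond what the paper records --- it treats the bijection as immediate once the normal form is granted --- and your version of the inclusion of semigroups is at least internally consistent with the division by $m_i$, which is more than can be said of the direction $\widetilde{\Lambda}\subseteq\Lambda$ as printed in the paper.
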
 
\begin{proof}
Each  $\widetilde{\Lambda}\subseteq \Lambda$ has the form 
$\widetilde{\Lambda}=\mathbf{Z}+\widetilde{\alpha}_1\mathbf{Z}+\dots+\widetilde{\alpha}_{r}\mathbf{Z}$,
where $\widetilde{\alpha}_i=\frac{\alpha_i}{m_i}$ 
for some integers $m_i\ge 1$\cite[p. 88]{BS}. 
It is easy to see, that a composition of isogenies   $Drin_A^{r}(\mathfrak{k})\to\widetilde{Drin}_A^{r}(\mathfrak{k})
\to \widetilde{\widetilde{Drin}}_A^{r}(\mathfrak{k})$ corresponds to the inclusions $\widetilde{\widetilde{\Lambda}}\subseteq\widetilde{\Lambda}\subseteq \Lambda$.
The latter gives rise to a component-wise multiplication of the integer vectors $(m_1,\dots,m_{r})$. Corollary \ref{cor3.2} follows.
\end{proof}

\begin{lemma}\label{lm3.3}
An isogeny $Drin_A^{r}(\mathfrak{k})\to\widetilde{Drin}_A^{r}(\mathfrak{k})$
acts on the image  $F(\Lambda_{\rho}[a])=\{e^{2\pi i\alpha_i+\log\log\varepsilon} ~|~1\le i\le r\}$
of the torsion submodule $\Lambda_{\rho}[a]$ via an extension
$\widetilde{k}=k(\sqrt[m_1]{x},\dots,\sqrt[m_{r}]{x})$
($\widetilde{k}=k(\Re\sqrt[m_1]{x},\dots,\Re\sqrt[m_{r}]{x})$ , resp.)
of the imaginary (real, resp.) number field $k=\mathbf{Q}(e^{2\pi i\alpha_i+\log\log\varepsilon})$
($k=\mathbf{Q}(\cos 2\pi\alpha_i \times\log\varepsilon)$, resp.) 
by the roots of degree $m_i$ of the elements $x\in k$. 
\end{lemma}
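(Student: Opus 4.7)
The plan is to reduce Lemma \ref{lm3.3} to a direct computation of what the isogeny does to the exponential representatives appearing in Theorem \ref{thm2.1}(ii), keeping Corollary \ref{cor3.2} and Corollary \ref{cor2.2} as the main inputs.

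First, I would invoke Corollary \ref{cor3.2} to parameterize the isogeny $Drin_A^{r}(\mathfrak{k})\to\widetilde{Drin}_A^{r}(\mathfrak{k})$ by an $r$-tuple of positive integers $(m_1,\dots,m_r)$, so that the Grothendieck semi-group of the target torus $\widetilde{\mathscr{A}}_{RM}^{2r}$ is generated by $\widetilde{\alpha}_i=\alpha_i/m_i$.  Applying Theorem \ref{thm2.1}(ii) to $\widetilde{Drin}_A^{r}(\mathfrak{k})$ then identifies
\[
F(\widetilde{\Lambda}_{\rho}[a])=\{e^{2\pi i\alpha_i/m_i+\log\log\varepsilon} ~|~1\le i\le r\},
\]
so that $\widetilde{k}\cong\mathbf{Q}\bigl(e^{2\pi i\alpha_i/m_i+\log\log\varepsilon}\bigr)$ is precisely the field that must be compared with the radical extension in the statement.

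The core step is the radical relation.  Set $\beta_i:=e^{2\pi i\alpha_i+\log\log\varepsilon}\in k$ and $\widetilde{\beta}_i:=e^{2\pi i\alpha_i/m_i+\log\log\varepsilon}$.  A direct computation gives
\[
\widetilde{\beta}_i^{\,m_i}=e^{2\pi i\alpha_i+m_i\log\log\varepsilon}=(\log\varepsilon)^{m_i-1}\beta_i=:x_i,
\]
so that $\widetilde{\beta}_i=\sqrt[m_i]{x_i}$ for an appropriate choice of branch.  Once one checks that the scaling factor $\log\varepsilon$ that enters the generators of $k$ in Theorem \ref{thm2.1}(ii) already lies in $k$, the element $x_i=(\log\varepsilon)^{m_i-1}\beta_i$ belongs to $k$ as well.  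The inclusion $\widetilde{k}\subseteq k\bigl(\sqrt[m_1]{x_1},\dots,\sqrt[m_r]{x_r}\bigr)$ then follows from $\widetilde{\beta}_i\in k(\sqrt[m_i]{x_i})$, and the reverse inclusion from $\sqrt[m_i]{x_i}=\zeta_{m_i}^{j}\widetilde{\beta}_i$ for an $m_i$-th root of unity $\zeta_{m_i}$ that is already available inside the relevant Galois closure.  The real case is handled by the same calculation combined with Corollary \ref{cor2.2}: here $k=\mathbf{Q}(\cos 2\pi\alpha_i\times\log\varepsilon)$, one writes $\cos(2\pi\alpha_i/m_i)\times\log\varepsilon=\Re\,\widetilde{\beta}_i$, and the radical relation passes to real parts, yielding $\widetilde{k}=k\bigl(\Re\sqrt[m_1]{x_1},\dots,\Re\sqrt[m_r]{x_r}\bigr)$.

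The main obstacle I anticipate is not the algebraic manipulation but the verification that $\log\varepsilon$ indeed lies in $k$ and that the branch of $\sqrt[m_i]{x_i}$ induced by the functor $F$ is the one appearing in the statement; this requires a careful reading of how the scaling factor is built into the generators in Theorem \ref{thm2.1}(ii) and \cite[Theorem 3.3]{Nik2}.  Once that is pinned down, everything else is bookkeeping around the identity $\widetilde{\beta}_i^{\,m_i}=x_i\in k$.
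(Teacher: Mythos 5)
Your overall strategy is the same as the paper's: parameterize the isogeny by the tuple $(m_1,\dots,m_r)$ via Corollary \ref{cor3.2}, apply Theorem \ref{thm2.1}(ii) to the target module to get $F(\widetilde{\Lambda}_{\rho}[a])$ with generators built from $\alpha_i/m_i$, and then exhibit these generators as $m_i$-th roots of elements of $k$. Where you diverge is in the treatment of the scaling factor, and this is exactly where your flagged obstacle lives. You freeze $\varepsilon$ and compute $\widetilde{\beta}_i^{\,m_i}=(\log\varepsilon)^{m_i-1}\beta_i$, which forces you to ask whether $\log\varepsilon\in k$ before you can claim $x_i\in k$; you leave that unverified. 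The paper instead treats the scaling factor of the target module as a free normalization: it sets $\log\widetilde{\varepsilon}_i=(\log\varepsilon)^{1/m_i}$, so that
\begin{equation*}
e^{2\pi i\alpha_i/m_i+\log\log\widetilde{\varepsilon}_i}=e^{\frac{1}{m_i}\left(2\pi i\alpha_i+\log\log\varepsilon\right)}=\left(e^{2\pi i\alpha_i+\log\log\varepsilon}\right)^{1/m_i},
\end{equation*}
making the new generator literally $\sqrt[m_i]{x}$ with $x=e^{2\pi i\alpha_i+\log\log\varepsilon}$ already a generator of $k$ by definition. This rescaling disposes of your $\log\varepsilon\in k$ question entirely (at the cost of accepting that $\widetilde{\varepsilon}$ is chosen rather than derived), whereas your version would need either that verification or the same renormalization to close. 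Neither your write-up nor the paper addresses the choice of branch of the $m_i$-th root beyond a remark, so you are not worse off there; but if you want your argument to stand on its own, you should either justify $\log\varepsilon\in k$ from the construction in \cite[Theorem 3.3]{Nik2} or adopt the paper's rescaling of $\widetilde{\varepsilon}$, after which your computation reduces to theirs.
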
 
\begin{proof}
(i) Using Corollary \ref{cor3.2}, one obtains a formula for the image of torsion submodule $\widetilde{\Lambda}_{\rho}[a]$:
\begin{equation}\label{eq3.1}
F(\widetilde{\Lambda}_{\rho}[a])=\{e^{2\pi \frac{\alpha_i}{m_i}+\log\log\widetilde{\varepsilon}} ~|~1\le i\le r\},
\end{equation}
where $m_i\ge 1$ are integer numbers. We let $m:=LCM (m_1,\dots, m_{r})$ be the least common multiple and
 $\log\widetilde{\varepsilon}:=(\log\varepsilon)^m$. 

\medskip
(ii) The substitution $\log \widetilde{\varepsilon}_i=(\log\varepsilon)^{\frac{1}{m_i}}$ brings (\ref{eq3.1}) 
to the form:
\begin{equation}\label{eq3.2}
F(\widetilde{\Lambda}_{\rho}[a])=\{\left(e^{2\pi  i\alpha_i+\log\log\varepsilon}\right)^{\frac{1}{m_i}}  ~|~1\le i\le r\}. 
\end{equation}
It follows from (\ref{eq3.2}) that the field $k$ is an extension of degree $m$ of the field $\widetilde{k}$ 
by the roots $\{\sqrt[m_i]{x} ~|~1\le i\le r\}$,  where $x\in k$. 

\medskip
(iii)  If $k=\mathbf{Q}(\cos 2\pi\alpha_i \times\log\varepsilon)$ is a real number field, we
take the real part $\Re$ of the complex numbers in formulas (\ref{eq3.1}) 
and (\ref{eq3.2}) and repeat the  argument of steps (i) and (ii).
Lemma \ref{lm3.3} is proved.
\end{proof}

\begin{corollary}\label{cor3.4}
For every $n\ge 1$ and every isogeny  $Drin_A^{r}(\mathfrak{k})\to\widetilde{Drin}_A^{r}(\mathfrak{k})$
there exists a Galois covering $V(k)\to kP^n$ with the branch locus of index $(m_1,\dots, m_{r})$. 
\end{corollary}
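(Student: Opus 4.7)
The plan is to combine Lemma \ref{lm3.3}, which converts the isogeny into an arithmetic radical extension, with [Nam1, Theorem 5], which realizes such extensions as branched Galois covers of projective space. The three ingredients of the proof are therefore the isogeny-to-field-extension correspondence from Lemma \ref{lm3.3}, a passage from a number-field extension to a function-field extension of $k(kP^n)$, and Namba's realization theorem.

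First I would feed the given isogeny $Drin_A^{r}(\mathfrak{k})\to\widetilde{Drin}_A^{r}(\mathfrak{k})$ through Lemma \ref{lm3.3} to obtain the extension $\widetilde{k}=k(\sqrt[m_1]{x_1},\dots,\sqrt[m_r]{x_r})$ of the number field $k$ (or the real-part analogue when $k\subset\mathbf{R}$), with the tuple $(m_1,\dots,m_r)$ supplied by Corollary \ref{cor3.2}. This already packages the prospective branch indices as exponents of a Kummer-type radical extension.

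Next I would lift the situation to function fields. Writing $K_n:=k(z_1,\dots,z_n)$ for the rational function field of $kP^n$, I would form the compositum $L:=K_n\cdot\widetilde{k}$ and observe that $L/K_n$ is finite and Galois with Galois group isomorphic to $\mathrm{Gal}(\widetilde{k}/k)$. Because each generator satisfies the separable polynomial $T^{m_i}-x_i$ with $x_i\in k$, a generic choice of $x_i$ (or of their linear combinations with coefficients in $K_n$) guarantees that each contributes a prime divisor of $kP^n$ over which $L/K_n$ ramifies with index exactly $m_i$.

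Finally I would invoke [Nam1, Theorem 5], which says that a finite Galois extension of $k(kP^n)$ with prescribed ramification data is realized by a normal projective variety together with a Galois branched covering of $kP^n$ carrying those ramification indices. Applying it to $L/K_n$ yields the desired $\pi:V(k)\to kP^n$ with branch locus of index $(m_1,\dots,m_r)$. The main obstacle I expect is ensuring that the arithmetic exponents $m_i$ coming from the Grothendieck-semigroup side of Lemma \ref{lm3.3} survive as genuine geometric ramification indices, rather than collapsing by an unintended common factor when passing to the compositum; the real case $k\subset\mathbf{R}$ is a minor separate check, since the radical structure of the extension persists after taking real parts, and Namba's theorem applies verbatim.
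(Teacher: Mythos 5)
Your strategy differs from the paper's, and its middle step contains a genuine gap. The paper's own proof is far more direct: it takes only the integer tuple $(m_1,\dots,m_r)$ supplied by Corollary \ref{cor3.2} and invokes \cite[Theorem 5]{Nam1} as a pure existence statement --- given $n$ and prescribed multiplicities along a union of $r$ hypersurfaces, there exists a Galois covering $V\to\mathbf{C}P^n$ with that branch locus --- and then arranges that $V$ is defined over $k$ by a deformation of $V$ in its moduli space. It does not attempt to realize the field extension $\widetilde{k}/k$ of Lemma \ref{lm3.3} as the function-field extension of the covering, which is precisely what your argument tries to do.

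The gap is in your function-field step. If $x_1,\dots,x_r$ lie in the constant field $k$, then $L=K_n\cdot\widetilde{k}$ is a constant-field extension of $K_n=k(z_1,\dots,z_n)$; geometrically this is just the base change $\widetilde{k}P^n\to kP^n$, which is everywhere unramified. No prime divisor of $kP^n$ ramifies in $L$, so the exponents $m_i$ do not survive as geometric branch indices --- they do not merely risk ``collapsing by a common factor,'' they disappear entirely. Your parenthetical repair (replacing the $x_i$ by combinations with coefficients in $K_n$) would indeed create ramification, but then $L$ is no longer the compositum with $\widetilde{k}$ and the link to Lemma \ref{lm3.3} is severed: you would be proving the existence of \emph{some} Galois covering with indices $(m_1,\dots,m_r)$, which is exactly what the paper extracts from Namba's theorem without the detour through field extensions. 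Note also that \cite[Theorem 5]{Nam1} is an existence theorem for coverings with a prescribed branch divisor and prescribed indices (constructed from quotients of the fundamental group of the complement), not a realization theorem for a given extension of $k(kP^n)$ with prescribed ramification data, so even a corrected version of your argument would need to be restated in those terms.
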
 
\begin{proof}
The result follows from [Namba 1985]  \cite{Nam1}.   
Indeed, given an integer $n\ge 1$ and the index of multiplicities $(m_1,\dots, m_{r})$ of the branch locus 
consisting of the union of $r$ hyper-surfaces,  one can construct a Galois covering: 
\begin{equation}
V\longrightarrow \mathbf{C}P^n,
\end{equation}
where $\mathbf{C}P^n$ is the $n$-dimensional complex projective space 
[Namba 1985]  \cite[Theorem 5]{Nam1}.
Since $k\subset\mathbf{C}$, one can always assume that projective variety
$V$ is defined over the number field $k$,  for otherwise one takes  a deformation of $V$ 
in its moduli space. Therefore  we get a covering:   
\begin{equation}
V(k)\longrightarrow \mathbf{C}P^n.
\end{equation}
\end{proof}

\begin{lemma}\label{lm3.5}
The projective variety $V(k)$ satisfies equation (\ref{eq1.1}). 
\end{lemma}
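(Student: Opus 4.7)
The plan is to identify $\mathscr{Q}(V(k))$ by computing the dimension group of the Serre $C^*$-algebra $\mathscr{A}_{V(k)}$ and then matching the resulting number field with $\log k$ or $\arccos k$ according to whether $k$ is imaginary or real. The starting point is Theorem \ref{thm2.5}: since $V(k)$ is a projective variety over the number field $k$, its Serre algebra $\mathscr{A}_{V(k)}$ has real multiplication by some triple $(\Lambda', [I'], K')$, so that $\mathscr{Q}(V(k))=(\Lambda',[I'],K')$ by Definition \ref{dfn1.1}. The task is therefore to pin down $\Lambda'$, $[I']$ and $K'$.

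First I would trace the construction of $V(k)$ back through Corollary \ref{cor3.4} and Namba's branched covering. The branch data $(m_1,\dots,m_r)$ comes from the isogeny $Drin_A^{r}(\mathfrak{k})\to\widetilde{Drin}_A^{r}(\mathfrak{k})$ classified in Corollary \ref{cor3.2}. Using the functor $F$ of Theorem \ref{thm2.1}, this isogeny is encoded by the endomorphism of $K_0^+(\mathscr{A}_{RM}^{2r})\cong\mathbf{Z}+\alpha_1\mathbf{Z}+\dots+\alpha_r\mathbf{Z}$ arising in Lemma \ref{lm3.1}. I would argue that the branched-covering construction is compatible with the assignment $V\mapsto\mathscr{A}_V$, so that there is an order-isomorphism
\begin{equation}\label{eq3.5}
(K_0(\mathscr{A}_{V(k)}),K_0^+(\mathscr{A}_{V(k)}))\;\cong\;(K_0(\mathscr{A}_{RM}^{2r}),K_0^+(\mathscr{A}_{RM}^{2r})).
\end{equation}
This identifies the endomorphism ring, the ideal class and the ambient field, yielding $\Lambda'=\Lambda$, $[I']=[I]$ and $K'=\mathbf{Q}(\alpha_i)$, with $\Lambda$ and $[I]$ exactly the data of Lemma \ref{lm3.1}.

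Second I would invoke Theorem \ref{thm2.1}(ii), which gives $F(\Lambda_\rho[a])=\{e^{2\pi i\alpha_i+\log\log\varepsilon}\mid 1\le i\le r\}$ and hence $k=\mathbf{Q}(F(\Lambda_\rho[a]))$. By the very definition of $\log k$ recalled in Section 1, $\log k$ is the field generated by those $\alpha_i$ such that $k\cong\mathbf{Q}(e^{2\pi i\alpha_i+\log\log\varepsilon})$, so $\mathbf{Q}(\alpha_i)=\log k$. This handles the case $k\subset\mathbf{C}-\mathbf{R}$ and gives the first line of (\ref{eq1.1}). For $k\subset\mathbf{R}$ I would repeat the argument with the real-part description $k=\mathbf{Q}(\cos 2\pi\alpha_i\times\log\varepsilon)$ of Corollary \ref{cor2.2}; now $\mathbf{Q}(\alpha_i)$ coincides by definition with $\arccos k$, giving the second line of (\ref{eq1.1}). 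The dichotomy between the two cases is exactly the dichotomy in the statement of Corollary \ref{cor2.2}.

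The main obstacle is the order-isomorphism (\ref{eq3.5}): one must show that the Serre algebra of the branched cover $V(k)\to \mathbf{C}P^n$ produced by Namba's theorem has the same dimension group as the noncommutative torus $\mathscr{A}_{RM}^{2r}$ attached to the underlying Drinfeld module. Morally this is a functoriality statement — the branch indices $(m_1,\dots,m_r)$ correspond under $F$ to the scaling $\alpha_i\mapsto\alpha_i/m_i$ of Corollary \ref{cor3.2} — but making it rigorous requires tracking the effect of the Galois group $G\subseteq GL_r(A/aA)$ on both sides and checking that the order and the ideal class are preserved, not merely the ambient field. Once this compatibility is established, the two displayed cases of (\ref{eq1.1}) follow immediately from the preceding paragraph.
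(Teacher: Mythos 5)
Your proposal is correct and follows essentially the same route as the paper: the paper packages your key order-isomorphism of dimension groups as the commutativity of a diagram relating a map $\mathscr{P}:(\Lambda,[I],\log k)\to V(k)$ (built from Lemmas \ref{lm3.1}, \ref{lm3.3} and Corollary \ref{cor3.4}) with the map $\mathscr{Q}$ of Theorem \ref{thm2.5}, and then concludes $K\cong\log k$ (resp.\ $\arccos k$) exactly as you do. The compatibility you single out as the main obstacle is precisely what the paper asserts as the commutativity of that diagram, with no more detail than you provide.
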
 
\begin{proof}
(i) Recall that  $\log k:=\mathbf{Q}(\alpha_i)$ ($\arccos k:=\mathbf{Q}(\alpha_i)$, resp.) is a number field, 
such that $k\cong \mathbf{Q}(e^{2\pi i\alpha_i+\log\log\varepsilon})$  
($k\cong \mathbf{Q}(\cos 2\pi\alpha_i \times\log\varepsilon)$, resp.)  
Lemmas \ref{lm3.1}, \ref{lm3.3} and Corollary \ref{cor3.4} 
give us a map
\begin{equation}\label{eq3.5}
\mathscr{P}: (\Lambda, [I], \log k)\longrightarrow V(k)
\end{equation}
sending order-isomorphic Handelman triples $(\Lambda, [I], \log k)$
to the $k$-isomorphic projective varieties $V(k)$. 

\medskip
(ii) On the other hand, there exists a map 
\begin{equation}\label{eq3.6}
\mathscr{Q}: V(k) \longrightarrow (\Lambda', [I]', K)
\end{equation}
which sends $k$-isomorphic projective varieties $V(k)$ to the order-isomorphic 
Handelman triples (Theorem \ref{thm2.5}). 

\medskip
(iii) One gets from (\ref{eq3.5}) and (\ref{eq3.6}) a commutative diagram in Figure 1. 
It follows from the diagram that 
$(\Lambda, [I], \log ~k)\cong (\Lambda', [I]',  K)$ are order-isomorphic Handelman triples. 
In particular, the number fields $K\cong \log ~k$ must be isomorphic. 
Therefore, the projective variety $V(k)$  satisfies equation (\ref{eq1.1}).

\begin{figure}
\begin{picture}(300,110)(-80,-5)
\put(45,70){\vector(2,-1){60}}
\put(133,70){\vector(0,-1){35}}
\put(45,83){\vector(1,0){60}}
\put(110,20){$(\Lambda', [I]',  K)$}
\put(-20,80){$(\Lambda, [I], \log ~k)$}
\put(120,80){ $V(k)$}
\put(70,90){$\mathscr{P}$}
\put(142,50){$\mathscr{Q}$}
\put(45,50){$Id$}
\end{picture}
\caption{Handelman's  triples}
\end{figure}

\medskip
Lemma \ref{lm3.5} is proved. 
\end{proof}

\bigskip
Theorem \ref{thm1.1} follows from Lemma \ref{lm3.5}.

\section{Complex multiplication}
We shall  illustrate Theorem \ref{thm1.1} in the simplest case of
an $n$-dimensional abelian variety with  complex multiplication  $A_{CM}^n(k)$
[Lang 1983] \cite{L}.  Roughly speaking, the following result says that $n=r$, 
where $r$ is the rank  of the Drinfeld module.
\begin{corollary}\label{cor4.1}
\begin{equation}
\mathscr{Q}(A^n_{CM}(k))=(\Lambda, [I], ~\log k), 
\end{equation}
where 
\begin{equation}\label{}
\left\{
\begin{array}{cl}
k &\cong  \mathbf{Q}(e^{2\pi i\alpha_1+\log\log\varepsilon}, \dots, e^{2\pi i\alpha_n+\log\log\varepsilon}),  \nonumber\\
\log k & \cong  \mathbf{Q}(\alpha_1,\dots, \alpha_n) .
\end{array}
\right.
\end{equation}
\end{corollary}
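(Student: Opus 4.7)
The plan is to specialize Theorem \ref{thm1.1} to the case $V(k)=A^n_{CM}(k)$ and then to identify the integer $r$ (the rank of the Drinfeld module whose image under $F$ gives the real-multiplication noncommutative torus of the variety) with the dimension $n$ of the abelian variety. First I would observe that any field of complex multiplication is, by definition, a totally imaginary quadratic extension of a totally real field; in particular $k\subset \mathbf{C}-\mathbf{R}$. Hence the upper branch of formula (\ref{eq1.1}) applies, giving $\mathscr{Q}(A^n_{CM}(k))=(\Lambda,[I],\log k)$ at once, and only the explicit shape of $\log k$ and of $k$ remains to be verified.

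Next I would pin down $r=n$. By Theorem \ref{thm2.5}, the Serre $C^*$-algebra $\mathscr{A}_V$ has real multiplication by a triple whose number field $K$ has degree $\deg(K\,|\,\mathbf{Q})=2m$, with $m=\dim_{\mathbf{C}}\mathscr{M}_V$. For an $n$-dimensional abelian variety the deformation moduli (the ambient Siegel-type moduli of complex tori in which $A_{CM}^n$ sits as a distinguished CM point) has complex dimension $n$, so $\deg(K\,|\,\mathbf{Q})=2n$. Comparing with the $K$-theoretic description
\begin{equation*}
K_0^+(\mathscr{A}_{RM}^{2r})\cong \mathbf{Z}+\alpha_1\mathbf{Z}+\dots+\alpha_{r}\mathbf{Z},
\end{equation*}
where the $\alpha_i$ are algebraic integers of degree $2r$ over $\mathbf{Q}$, forces $r=n$. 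Equivalently, the period lattice of $A^n_{CM}$ has rank $2n$, and passage through the functor $F$ of Theorem \ref{thm2.1} produces a $2n$-dimensional noncommutative torus, which by the inverse direction of the construction comes from a Drinfeld module of rank exactly $n$.

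Once $r=n$ is established, the explicit formulas follow directly from Theorem \ref{thm2.1}(ii) and the definitions immediately preceding Theorem \ref{thm1.1}. Namely, the image of the torsion submodule under $F$ is the set
\begin{equation*}
F(\Lambda_{\rho}[a])=\{e^{2\pi i\alpha_i+\log\log\varepsilon}\mid 1\le i\le n\},
\end{equation*}
so $k\cong \mathbf{Q}(F(\Lambda_{\rho}[a]))=\mathbf{Q}(e^{2\pi i\alpha_1+\log\log\varepsilon},\dots,e^{2\pi i\alpha_n+\log\log\varepsilon})$, and $\log k$ is by definition the subfield generated by the $\alpha_i$, namely $\mathbf{Q}(\alpha_1,\dots,\alpha_n)$. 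The order $\Lambda$ and the ideal class $[I]$ are inherited from the dimension group structure of $\mathscr{A}_V$ as in Definition \ref{dfn1.1} and Lemma \ref{lm3.1}.

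The main obstacle I anticipate is the matching $r=n$: while the totally-imaginary nature of a CM field and the $K$-theoretic bookkeeping are routine, the equality of the Drinfeld-module rank with the complex dimension of the abelian variety is the substantive input. It must be justified by comparing the two computations of $\deg(K\,|\,\mathbf{Q})$, one through $\dim_{\mathbf{C}}\mathscr{M}_V=n$ (Theorem \ref{thm2.5}) and the other through the generators of $K_0^+(\mathscr{A}_{RM}^{2r})$ (Theorem \ref{thm2.1}(ii)); any slippage in the identification of the moduli dimension at the CM point would propagate directly into the displayed formula for $\log k$.
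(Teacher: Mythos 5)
Your overall strategy---specialize Theorem \ref{thm1.1} to the CM case, note that a CM field is totally imaginary so the upper branch of (\ref{eq1.1}) applies, and reduce everything to the single identification $r=n$---matches the paper's, and your reading of the explicit formulas for $k$ and $\log k$ via Theorem \ref{thm2.1}(ii) and the definitions preceding Theorem \ref{thm1.1} is fine. The gap is in how you establish $r=n$. You derive it from the claim that the deformation moduli space of an $n$-dimensional abelian variety has complex dimension $n$, so that $\deg(K\,|\,\mathbf{Q})=2m=2n$ by Theorem \ref{thm2.5}. That claim is not justified and is false for the standard moduli spaces: principally polarized abelian $n$-folds move in a family of dimension $n(n+1)/2$ (and unpolarized complex tori in dimension $n^2$), which equals $n$ only when $n=1$; and if you instead mean deformations preserving the CM structure, that space is zero-dimensional. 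Nothing in the paper licenses $\dim_{\mathbf{C}}\mathscr{M}_{A^n_{CM}}=n$, so the ``substantive input'' you yourself flag as the main obstacle is exactly where your argument breaks.

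The paper takes a different and shorter route to $r=n$: it cites the previously computed invariant $\mathscr{Q}(A^n_{CM})=(\Lambda,[I],K)$ with $\Lambda=\mathbf{Z}+\alpha_1\mathbf{Z}+\dots+\alpha_n\mathbf{Z}$ from \cite[Remark 6.6.2]{N} and compares it with the order $\Lambda=\mathbf{Z}+\alpha_1\mathbf{Z}+\dots+\alpha_r\mathbf{Z}$ produced by Lemma \ref{lm3.1} for Drinfeld modules; equating the number of generators gives $n=r$ directly, with no appeal to moduli dimensions. To repair your version you would need either to import that external computation as the paper does, or to prove independently that the moduli dimension entering Theorem \ref{thm2.5} equals $n$ for $A^n_{CM}$---which is precisely the point your proposal leaves unestablished.
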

\begin{proof}
It is known that $\mathscr{Q}(A^n_{CM})=(\Lambda, [I], K)$,
where $\Lambda=\mathbf{Z}+\alpha_1\mathbf{Z}+\dots+\alpha_n\mathbf{Z}$
\cite[Remark 6.6.2]{N}. It remains to compare this result with the formula
$\Lambda=\mathbf{Z}+\alpha_1\mathbf{Z}+\dots+\alpha_r\mathbf{Z}$
obtained in Lemma \ref{lm3.1} for the Drinfeld modules. Thus one gets $n=r$
in Theorem \ref{thm1.1}. Corollary \ref{cor4.1} follows.
\end{proof}

\begin{example}
Let $n=1$, i.e. $V(k)\cong \mathscr{E}_{CM}$ is an elliptic curve with complex multiplication (Example \ref{exm2.4}). 
In this case $k\cong  \mathbf{Q}(e^{2\pi i\alpha+\log\log\varepsilon})$ and $\log k\cong\mathbf{Q}(\alpha)$, 
where $\alpha$ and $\varepsilon\in\log k$ are irrational quadratic numbers. 
On the other hand,  it is well known that $k\cong \mathbf{Q}(j(\mathscr{E}_{CM}))$ is the Hilbert class field $\mathscr{H}(\mathbf{Q}(\sqrt{-d}))$ of an imaginary quadratic field $\mathbf{Q}(\sqrt{-d})$
corresponding to complex multiplication
with $j(\mathscr{E}_{CM})$ being the $j$-invariant of  $\mathscr{E}_{CM}$. In other words, the algebraic number $e^{2\pi i\alpha+\log\log\varepsilon}$
is a generator of the field  $\mathscr{H}(\mathbf{Q}(\sqrt{-d}))$ distinct from the Weierstrass generator  $j(\mathscr{E}_{CM})$. This fact was first observed in \cite{Nik3}. 
\end{example}

\section*{Data availability}
  
  Data sharing not applicable to this article as no datasets were generated or analyzed during the current study.
   
\section*{Conflict of interest}
On behalf of all co-authors, the corresponding author states that there is no conflict of interest.
  

\section*{Funding declaration}
The author was partly supported by the NSF-CBMS grant 2430454.

\bibliographystyle{amsplain}


\end{document}